\theoremstyle{plain}
\newtheorem{theorem}{Theorem}[section]
\newtheorem{lemma}[theorem]{Lemma}
\theoremstyle{definition}
\newtheorem{definition}[theorem]{Definition}
\newtheorem{counter example}[theorem]{Counter Example}
\newtheorem{corollary}[theorem]{Corollary}
\numberwithin{equation}{section}
\DeclareMathAlphabet{\mathscr}{OT1}{pzc}{m}{it} 
\begin{document}
\Large{
		\title{CATEGORY BASES THAT ARE EQUIVALENT TO TOPOLOGIES}
		
		\author[A.C.Pramanik]{Abhit Chandra Pramanik}
	\address{\large{Department of Pure Mathematics, University of Calcutta, 35, Ballygunge Circular Road, Kolkata 700019, West Bengal, India}}
	\email{\large{abhit.pramanik@gmail.com}}
		
		\author[S. Basu]{Sanjib Basu}
		\address{\large{Department of Mathematics,Bethune College,181 Bidhan Sarani}}
		\email{\large{sanjibbasu08@gmail.com}}
		
		\author[A. Deb Ray]{Atasi Deb Ray}
		\address{\large{Department of Pure Mathematics, University of Calcutta, 35, Ballygunge Circular Road, Kolkata 700019, West Bengal, India}}
		\email{\large{debrayatasi@gmail.com}}

		\thanks{The first author thanks the CSIR, New Delhi – 110001, India, for financial support}
	\begin{abstract}
	In view of the fact that many of the most familiar examples of category bases are equivalent to some topology, it is natural to ask whether category bases are always topological in nature. The answer is `no'. In this paper, we show that under certain circumstances, a category base can be equivalent to a topology. So this work may be considered a continuation of similar type of works done earlier in this area.
	\end{abstract}
\subjclass[2010]{03E20, 54A05, 54E52}
\keywords{Category base, minimal region, meager set, Baire set, first category set, set having Baire property, $D$-topology}
\thanks{}
	\maketitle

\section{INTRODUCTION}
There is a topology in the real line such that sets having Baire property in this topology coincide with Lebesgue measurable sets and first category sets coincide with sets having Lebesgue measure zero. A similar situation occurs with Marczewski sets [11] as well.\\

Since the family of Lebesgue measurable sets and that of Marczewski sets both form catrgory bases, it follows from above that these category bases are topological in nature. A basic question may be raised : which category bases are equivalent to topological spaces ?. Morgan [3] showed that in a category base, if every region contains a minimal region, then it is equivalent to some topology. It was also shown by Schilling [11], Piotrowski and Szymanski in an abstract  presented to AMS (for reference see [11], [1]) that category bases whose cardinalities are less than or equal to $\aleph_{1}$ are equivalent to  topologies. In the same abstract, Piotrowski and Szymanski also announced the construction of a category base which is not equivalent to any topology. They did not however presented a proof of this phenomenon. Using the concept of lower density, Detlefson and Szymanski [1] provided characterization of category bases equivalent to topologies. Thus in the past, it was shown that under certain circumstances, a category base can be equivalent to a topology and this paper is a continuation of such type of works done earlier in this area. Here with the help of a condition that generalizes condition used by Morgan, it is shown that a category base can be equivalent to a topology.
\section{PRELIMINARIES AND RESULTS}
In a series of papers [4], [5], [6], [7] etc, Morgan developed the theory of category bases which generalizes the notion of a topology and within which many of the analogies between Lebesgue measure and Baire category can be unified. We start by recalling some of the basic Definitions and results relevant to our purpose. These may be found collectively in the book [8].

\begin{definition}
A category base is a pair $(X,\mathcal{C}$) where $X$ is a non-empty set and $\mathcal{C}$ is a family of subsets of $X$ whose non-empty members called regions, satisfy the following set of axioms:
\begin{enumerate}
\item Every point of $X$ belongs to some region; i.e., $X = \cup$ $\mathcal{C}$.
\item Let $A$ be a region and $\mathcal{D}$ be any non-empty family of disjont regions having cardinality less than the cardinality of $\mathcal{C}$.\\
i) If $A$ $\cap$ ( $\cup$ $\mathcal{D}$) contains a region, then there is a region $D \in$ $\mathcal{D}$ such that $A$ $\cap$ $D$ contains a region.\\
ii) If $A$ $\cap$ ( $\cup$ $\mathcal{D}$) contains no region, then there is a region $B$ $\subseteq$ $A$ that is disjoint from every region in $\mathcal{D}$.
\end{enumerate}
\end{definition}

\begin{definition}
In a category base ($X,\mathcal{C}$), a set $S$ is called $\mathcal{C}$-singular (or, singular) if every region contains a subregion which is disjoint from $S$. A set which can be expressed as countable union of singular sets is called $\mathcal{C}$-meager (or, meager). Otherwise, it is called abundant.
\end{definition}
 The class of all $\mathcal{C}$-meager sets is here denoted by the symbol $\mathcal{M}$($\mathcal{C}$). In a category base which is equivalent to some topology $\tau$, this class coincides with the class of all first category sets expressed here by the symbol $\mathcal{M}$($\tau$).

\begin{definition}
	In a category base ($X,\mathcal{C}$), a set $S$ is called a $\mathcal{C}$-Baire (or, Baire) set if in every region, there is a subregion in which either $S$ or its complement $X$ $-$ $S$ is meager.
\end{definition}
 The class of all $\mathcal{C}$-Baire sets is here denoted by $\mathcal{B}$($\mathcal{C}$). In a category base which is equivalent to some topology $\tau$, this class is same as the class of all sets having Baire property [10] which is here denoted by $\mathcal{B}a$($\tau$). \\
 
Thus a category base (X, $\mathcal{C}$) is  equivalent to a topological space ($X, \tau$) iff $\mathcal{M}$($\mathcal{C}$) = $\mathcal{M}$($\mathcal{\tau}$) and $\mathcal{B}$($\mathcal{C}$) = $\mathcal{B}a$($\tau$).\\

In a category base ($X, \mathcal{C}$),
\begin{theorem}
	The intersection of two regions either contains a region or is a singular set. 
\end{theorem}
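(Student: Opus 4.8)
The plan is to establish the stated dichotomy by proving it in the following contrapositive form: assuming that the intersection $A \cap B$ of two regions contains no region, I would show that $A \cap B$ is singular. Since singularity of $A \cap B$ means that \emph{every} region contains a subregion disjoint from $A \cap B$, the whole argument reduces to producing, for an arbitrary region $E$, a subregion of $E$ that misses $A \cap B$.

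The key tool is axiom (2)(ii) in the definition of a category base, applied to a one-element family $\mathcal{D} = \{R\}$ consisting of a single region $R$. Such a family is trivially a family of pairwise disjoint regions, and its cardinality $1$ is less than the cardinality of $\mathcal{C}$, so the axiom applies. Fixing a region $E$, I would split into two cases according to whether $E \cap A$ contains a region. If $E \cap A$ contains no region, then applying (2)(ii) to $E$ and $\{A\}$ yields a region $E' \subseteq E$ disjoint from $A$; since $A \cap B \subseteq A$, this $E'$ is disjoint from $A \cap B$ as well, and $E'$ is the desired subregion.

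In the remaining case $E \cap A$ contains a region $F$, so that $F \subseteq E$ and $F \subseteq A$. Because $F \subseteq A$, we have $F \cap B \subseteq A \cap B$, and since by hypothesis $A \cap B$ contains no region, neither does $F \cap B$. Applying (2)(ii) a second time, now to the region $F$ and the family $\{B\}$, produces a region $F' \subseteq F \subseteq E$ disjoint from $B$, hence disjoint from $A \cap B$. In both cases $E$ possesses a subregion disjoint from $A \cap B$, and as $E$ was arbitrary, $A \cap B$ is singular, which completes the dichotomy.

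I do not expect a serious obstacle, since the argument is essentially formal once the right objects are supplied to the axiom; the one point demanding care is the legitimacy of invoking the second axiom with a singleton family, namely that the cardinality requirement $|\mathcal{D}| < |\mathcal{C}|$ is genuinely met, which holds automatically in the nondegenerate setting where $\mathcal{C}$ contains at least two regions. A secondary subtlety is to remember that singularity is a statement about \emph{every} region $E$, not merely about $A$ or $B$ themselves; the case split on whether $E \cap A$ contains a region is precisely what bridges that gap.
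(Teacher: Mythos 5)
Your proof is correct. The paper states this result without proof (it is quoted from Morgan's book, reference [8]), and your argument --- reducing to the case where $A \cap B$ contains no region, then for an arbitrary region $E$ splitting on whether $E \cap A$ contains a region and applying axiom (2)(ii) with a singleton family $\{A\}$ or $\{B\}$ as appropriate --- is precisely the standard proof of this fact. Your side remarks are also handled correctly: the cardinality condition $1 < |\mathcal{C}|$ is harmless since in the degenerate case where $\mathcal{C}$ has a single region the intersection of two regions is that region itself, and the observation that any region inside $F \cap B$ would also lie inside $A \cap B$ is exactly the point that makes the second application of (2)(ii) legitimate.
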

\begin{theorem}(The Fundamental Theorem)
	Every abundant set is abundant everywhere in some region. This means that for any abundant set $S$, there exists a region $C$ in every subregion D of which $S$ is abundant.\\
\end{theorem}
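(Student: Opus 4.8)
The plan is to argue by contradiction, reducing the theorem to the single assertion that a set which is meager on a ``dense'' collection of regions must itself be meager. Suppose $S$ is abundant but that the conclusion fails, so that no region has $S$ abundant everywhere in it. Unwinding the definition of ``abundant everywhere,'' this failure says precisely that every region $A$ contains a subregion $D\subseteq A$ in which $S$ is meager. Thus the family $\mathcal{H}=\{D\in\mathcal{C}: S \text{ is meager in } D\}$ meets every region from below, and the goal is to exploit this density to engulf $S$ in a meager set, contradicting abundance.

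First I would invoke Zorn's Lemma to fix a maximal disjoint subfamily $\mathcal{D}\subseteq\mathcal{H}$, and then analyse $S$ through the splitting $S=\big(S\cap\bigcup\mathcal{D}\big)\cup\big(S\setminus\bigcup\mathcal{D}\big)$. The argument then rests on two claims: that the part of $S$ inside $\bigcup\mathcal{D}$ is meager, and that the part outside $\bigcup\mathcal{D}$ is singular. Granting both, $S$ becomes a countable union of singular sets together with one further singular set, hence meager, which is the sought contradiction.

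For the first claim the combinatorial heart of the matter is that a disjoint union of singular pieces, one sitting inside each member of $\mathcal{D}$, is again singular. Writing each $S\cap D$ with $D\in\mathcal{D}$ as $\bigcup_n T_{D,n}$ with every $T_{D,n}$ singular, I would show for each fixed $n$ that $\bigcup_{D\in\mathcal{D}}T_{D,n}$ is singular: given an arbitrary region $E$, Axiom (2) applied to $E$ and $\mathcal{D}$ either yields a subregion of $E$ missing $\bigcup\mathcal{D}$ altogether, or --- through part (i) of that axiom --- produces a region $F\subseteq E\cap D$ for a single $D$, inside which the singularity of $T_{D,n}$ supplies a further subregion avoiding $T_{D,n}$ and, by disjointness of $\mathcal{D}$, every other $T_{D',n}$ as well. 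Unioning over the countably many $n$ then exhibits $S\cap\bigcup\mathcal{D}=\bigcup_n\bigcup_{D}T_{D,n}$ as meager. For the second claim I would lean on maximality: were some region $E$ to meet $\bigcup\mathcal{D}$ in no region, Axiom (2)(ii) would hand back a subregion of $E$ disjoint from every member of $\mathcal{D}$, and the density of $\mathcal{H}$ would then furnish a member of $\mathcal{H}$ enlarging $\mathcal{D}$, contradicting maximality; hence every region meets $\bigcup\mathcal{D}$ in a region, which is exactly the statement that $X\setminus\bigcup\mathcal{D}$, and with it its subset $S\setminus\bigcup\mathcal{D}$, is singular.

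The step I expect to be the main obstacle is the cardinality bookkeeping needed to make Axiom (2) applicable, since that axiom is granted only for disjoint families of cardinality strictly less than $|\mathcal{C}|$, whereas a maximal disjoint subfamily of $\mathcal{H}$ need not a priori be that small. I would therefore have to ensure the relevant family is of admissible size --- either by checking that $|\mathcal{D}|<|\mathcal{C}|$ holds automatically in the situation at hand, or by separating cases according to the size of $\mathcal{D}$ and disposing of the extremal case $|\mathcal{D}|=|\mathcal{C}|$ on its own. Once the axiom is licensed, the remaining verifications reduce to the routine ``contains a region'' versus ``contains no region'' dichotomies that Axiom (2) and Theorem 2.4 are built to resolve.
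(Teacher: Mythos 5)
First, note that the paper does not prove this statement: Theorem 2.5 is quoted as background from Morgan's book [8], so the only available comparison is with Morgan's own argument, whose skeleton your outline reproduces faithfully --- the reductio setup, the disjoint family $\mathcal{D}$ of regions in which $S$ is meager, the splitting $S=(S\cap\bigcup\mathcal{D})\cup(S\setminus\bigcup\mathcal{D})$, and the two claims, both argued correctly modulo the point below. You have also correctly located the one genuine difficulty, but the two remedies you offer for it do not work, and this is where the gap lies. The inequality $|\mathcal{D}|<|\mathcal{C}|$ is \emph{not} automatic: a maximal disjoint subfamily of $\mathcal{C}$ can be all of $\mathcal{C}$ (for instance when $\mathcal{C}$ is itself a partition of $X$), and nothing in the reductio hypothesis prevents this. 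Nor can the extremal case $|\mathcal{D}|=|\mathcal{C}|$ be ``disposed of on its own'' once you have committed to a Zorn-maximal family: in that case Axiom 2 is simply unavailable for $\mathcal{D}$, and without it you can neither produce the subregion of $E$ missing $\bigcup\mathcal{D}$ (which is what makes maximality bite in your second claim) nor isolate a single $D$ with $E\cap D$ containing a region (which your first claim needs); falling back on Theorem 2.4 only tells you that each $E\cap D$ is singular, and a union of $|\mathcal{C}|$-many singular sets need not be meager.

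The standard repair is to abandon Zorn's Lemma and build $\mathcal{D}$ by transfinite recursion along a well-ordering $\langle C_\xi:\xi<\lambda\rangle$ of $\mathcal{C}$, where $\lambda$ is the initial ordinal of cardinality $|\mathcal{C}|$. At stage $\xi$ the family $\mathcal{D}_\xi$ chosen so far is indexed by ordinals below $\xi<\lambda$, hence has cardinality strictly less than $|\mathcal{C}|$, so Axiom 2 \emph{does} apply to $C_\xi$ and $\mathcal{D}_\xi$: either some $D\in\mathcal{D}_\xi$ has $C_\xi\cap D$ containing a region, or part (ii) yields a subregion of $C_\xi$ disjoint from every member of $\mathcal{D}_\xi$, inside which the reductio hypothesis provides a new region $D_\xi$ (with $S$ meager in it) to adjoin. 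Either way one records, stage by stage, the property that every region $C_\xi$ meets some \emph{single} member of the final family $\mathcal{D}$ in a set containing a region. That recorded property is exactly what both of your claims consume --- it gives the subregion of $E$ contained in $\bigcup\mathcal{D}$ for the singularity of $S\setminus\bigcup\mathcal{D}$, and it gives the region $F\subseteq E\cap D$ on which singularity of $T_{D,n}$ plus disjointness of $\mathcal{D}$ finishes the claim that each $\bigcup_{D}T_{D,n}$ is singular --- and it is obtained without ever applying Axiom 2 to the full family. With that substitution your argument is complete.
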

The above theorem is a generalization of the famous Banach category theorem in topological spaces [8].\\

The Fundamental Theorem stated above and the Definition of a Baire set leads to the following 
\begin{corollary}
	If $B$ is an abundant Baire set, then there exists a region $C$ such that $C$ $-$ $B$ is meager. 
\end{corollary}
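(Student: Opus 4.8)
The plan is to combine the two results just established---the Fundamental Theorem and the defining property of a Baire set---and to play the abundance of $B$ against its Baire property. The Fundamental Theorem will supply a region on which $B$ is \emph{abundant everywhere}, and the Baire condition, applied to that very region, will then be forced into the alternative that makes the complement of $B$ meager.

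First I would invoke the Fundamental Theorem on the abundant set $B$ to obtain a region $C_0$ with the property that $B$ is abundant in every subregion $D$ of $C_0$. This is the crucial localization step: it guarantees that $B$ cannot be meager when viewed inside any subregion of $C_0$, and it is what prevents the Baire dichotomy from collapsing to the wrong branch. Next I would apply the definition of a Baire set to the region $C_0$ itself. This yields a subregion $D \subseteq C_0$ in which either $B$ or its complement $X - B$ is meager. By the previous step $B$ is abundant in $D$, hence not meager there, so the first alternative is impossible; we are forced to conclude that $X - B$ is meager in $D$, equivalently that $D - B$ is meager. Relabelling $C := D$ (a subregion is itself a region) then gives a region $C$ with $C - B$ meager, which is exactly the assertion.

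The step I expect to require the most care is reconciling the local notion of meagerness appearing in the definition of a Baire set with the global notion used in the conclusion: one must check that ``$B$ is abundant in the subregion $D$'' genuinely contradicts ``$B$ is meager in $D$,'' and that ``$X - B$ meager in $D$'' delivers the meagerness of $C - B$ as a subset of $X$. Once these notions are matched, the deduction is only a couple of lines; the real content is carried entirely by the Fundamental Theorem, whose role is precisely to rule out the unwanted Baire alternative by making $B$ abundant throughout a whole region.
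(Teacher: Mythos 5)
Your proof is correct and takes exactly the route the paper intends: the corollary is stated there as an immediate consequence of the Fundamental Theorem together with the definition of a Baire set, and your argument (localize $B$'s abundance to a region $C_0$, then use the Baire dichotomy on $C_0$ to rule out the branch ``$B\cap D$ meager'') is precisely that deduction. The point you flag about reconciling local and global meagerness is unproblematic, since ``$X-B$ is meager in $D$'' simply means the set $D-B$ is meager.
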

Now we define 
\begin{definition}
	An operator $D$ : $\mathcal{P}(X)\mapsto$ $\mathcal{P}(X)$ on the set of all subsets of $X$ of ($X, \mathcal{C}$) satisfying the conditions : 
	\begin{enumerate}
		\item $D(X) = X$ and $D(S) = \phi$ if $S$ is singular.
		\item $D$($S$ $\cup$ $T$) = $D(S)$ $\cup$ $D(T$).
	\end{enumerate}
\end{definition}
It is easy to check that if $S$ $\subseteq$ $T$ then $D(S$) $\subseteq$ $D(T)$ for in this case we may write $T$ = $S$ $\cup$ ($T$ $-$ $S$) and use (ii). Moreover, $D(S$) $-$ $D(T$) $\subseteq$ $D$($S$ $-$ $T$) for any two sets $S$ and $T$ and $D$( $\bigcap\limits_{i} S_i$) $\subseteq$ $\bigcap\limits_{i}$ $D(S_i$) for any arbitrary collection \{$S_i$\} of sets in $X$.\\

A set $S$ in a category base ($X, \mathcal{C}$) is called locally meager at a point $x$ [8] if for every region $A$ containing $x$, there is a subregion $B$ of $A$ containing $x$ such that $S$ $\cap$ $B$ is meager. Otherwise, $S$ is called locally abundant at $x$ [8]. In otherwords, $S$ is locally abundant at $x$ if there is a region $A$ containing $x$ such that for every subregion $B$ of $A$ and $x \in$ $B$ implies that $S$ $\cap$ $B$ is abundant.\\

If $S$ is locally abundant at $x$, then $x$ is called a cluster point of $S$ [8]. Setting $C(S)$ as the set of all cluster points of $S$, it is easy to check that in particular the operator $S$ $\mapsto$ $C(S)$ satisfies conditions (i) and (ii) of Definition 2.7.\\

If $D$ is in general an operator given by Definition 2.7, then $\tau$($D$) = \{ $S$ : $D(X$ $-$ $S$) $\subseteq$ $X$ $-$ $S$\} forms a topology which we call the $D$-topology associated with $(X, \mathcal{C}$) and whose members will be called the $D$-open sets. In particular, when $D(S)$ is the set $C(S)$ of all cluster points of $S$, this topology is referred to as the basic topology on $X$ [8].\\

Our theorem states 
\begin{theorem} \label{thrm:neat}
	In a category base $(X, \mathcal{C}$), if we assume that every region contains a non-empty $D$-open set, then $(X, \mathcal{C}$) is equivalent to the $D$-topology. In otherwords, $\mathcal{M}$($\mathcal{C}$) = $\mathcal{M}$($\tau(D))$ and $\mathcal{B}$($\mathcal{C}$) = $\mathcal{B}a(\tau(D))$.
\end{theorem}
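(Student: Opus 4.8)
The plan is to prove the two set-equalities $\mathcal{M}(\mathcal{C})=\mathcal{M}(\tau(D))$ and $\mathcal{B}(\mathcal{C})=\mathcal{B}a(\tau(D))$ separately, in each case splitting into the two inclusions and translating between the region language of $\mathcal{C}$ and the closure/interior language of $\tau(D)$. The elementary engine for everything is the following computation for a $D$-open set $U$: since $D(X)=X$ and $D$ is additive, $X=D\bigl((X-U)\cup U\bigr)=D(X-U)\cup D(U)$, so $U=U\cap X=(U\cap D(X-U))\cup(U\cap D(U))$; as $U$ is $D$-open the first term is empty, whence $U\subseteq D(U)$. In particular a non-empty $D$-open set cannot be singular, because $D$ of a singular set is empty.

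First I would treat the inclusion $\mathcal{M}(\mathcal{C})\subseteq\mathcal{M}(\tau(D))$. If $S$ is singular then $D(S)=\emptyset\subseteq S$, so $S$ is $D$-closed and equals its own $\tau(D)$-closure; moreover its $\tau(D)$-interior is empty, for any non-empty $D$-open $U\subseteq S$ would be a singular $D$-open set, which is impossible by the remark above. Hence every singular set is $\tau(D)$-nowhere dense, and taking countable unions gives that every $\mathcal{C}$-meager set is of the first category in $\tau(D)$.

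The reverse inclusion $\mathcal{M}(\tau(D))\subseteq\mathcal{M}(\mathcal{C})$ is where the hypothesis is indispensable, and it is the main obstacle. It suffices to show that a $\tau(D)$-nowhere dense set $S$ is $\mathcal{C}$-meager; replacing $S$ by its closure I may assume $S$ is $D$-closed with empty $\tau(D)$-interior. Here I would argue by contradiction using the Fundamental Theorem: if $S$ were abundant, there would be a region $C$ in every subregion of which $S$ is abundant, and by hypothesis $C$ contains a non-empty $D$-open set; the difficulty is to convert this $D$-open set, together with the density of the open set $X-S$, into a subregion of $C$ on which $S$ is meager, thereby contradicting abundance everywhere in $C$. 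This step is delicate precisely because a non-empty $D$-open set need not itself contain a region, so the passage between $D$-open sets and regions has to be mediated by the category-base axioms (Theorem 2.4 and the second axiom) and by the Fundamental Theorem rather than performed directly; establishing that non-empty $D$-open sets are $\mathcal{C}$-abundant is, I expect, the crux of the whole proof.

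Finally, for $\mathcal{B}(\mathcal{C})=\mathcal{B}a(\tau(D))$ I would use the meager equality just proved. For $\mathcal{B}a(\tau(D))\subseteq\mathcal{B}(\mathcal{C})$, write a set with the Baire property as $U\triangle M$ with $U$ $D$-open and $M$ meager; inside a given region one uses the hypothesis, together with the region axioms, to produce a subregion on which $S$ or $X-S$ differs from that subregion only by the meager set $M$, yielding the $\mathcal{C}$-Baire condition. For the reverse inclusion I would take a $\mathcal{C}$-Baire set $S$ and build a $D$-open set $G$ with $S\triangle G$ meager, taking $G$ to be the union of all non-empty $D$-open sets $V$ for which $V-S$ is meager; here Corollary 2.6 supplies, on each region where $S$ is abundant, a region almost contained in $S$, while the Fundamental Theorem (the Banach category theorem analogue) is what controls the possibly uncountable union defining $G$ and shows that both $G-S$ and $S-G$ are meager. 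The control of this uncountable union via the Fundamental Theorem is the second substantial point of the argument.
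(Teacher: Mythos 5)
Your outline follows the same overall route as the paper: prove $\mathcal{M}(\mathcal{C})=\mathcal{M}(\tau(D))$ first, reduce the hard direction to $D$-nowhere dense sets via the Fundamental Theorem, and then handle the Baire classes with a Banach-category-theorem argument controlling an uncountable union of $D$-open sets. Your first inclusion $\mathcal{M}(\mathcal{C})\subseteq\mathcal{M}(\tau(D))$ is complete and correct; indeed your observation that $U\subseteq D(U)$ for $D$-open $U$, and your reduction to \emph{singular} rather than meager sets, uses only condition (i) of Definition 2.7 and is arguably cleaner than the paper's computation with a meager $M$. But at the point you yourself call ``the crux'' --- converting a non-empty $D$-open subset of a region into a subregion on which a given $\tau(D)$-nowhere dense set is meager --- you stop and only describe the difficulty. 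That is a genuine gap, and it is exactly the content of the paper's Lemma 2.9: \emph{every non-empty $D$-open set $V$ is an abundant Baire set in $(X,\mathcal{C})$}. Abundance: if $V$ were meager, so that $D(V)=\phi$, then $X=D(X)=D(X)-D(V)\subseteq D(X-V)\subseteq X-V$, a contradiction. Baireness: if both $V$ and $X-V$ were abundant everywhere in some region $C$, then $V\cap C\subseteq D(X-V)\subseteq X-V$, again a contradiction. With this lemma in hand, the step you could not perform becomes routine: given a region $C$ in which the nowhere dense set $A$ is abundant everywhere, take a non-empty $D$-open $V\subseteq U\subseteq C$ with $A\cap V=\phi$; Corollary 2.6 gives a region $C'$ with $C'-V$ meager, Theorem 2.4 gives a region $E\subseteq C\cap C'$, and then $E\cap A\subseteq C'-V$ is meager while $A$ is abundant in $E$.

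The same bridge is indispensable in both halves of $\mathcal{B}(\mathcal{C})=\mathcal{B}a(\tau(D))$, which you also leave at the level of intention: for $\mathcal{B}(\mathcal{C})\subseteq\mathcal{B}a(\tau(D))$ it is what lets you attach to each non-empty $D$-open $T$ a region $C$ with $C-T$ meager, so that the $\mathcal{C}$-Baire condition on a subregion of $C$ can be transported to a non-empty $D$-open $T'\subseteq T$ before invoking the Union Theorem; for the reverse inclusion it is what turns the $D$-open kernel $H$ of $A=(H-Q)\cup R$ into regions witnessing the $\mathcal{C}$-Baire condition inside an arbitrary region. Your alternative of defining $G$ as the union of all $D$-open $V$ with $V-S$ meager is a workable variant of the paper's well-ordering argument, but it too collapses without the lemma. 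In short: the architecture is right, but the one load-bearing ingredient --- that non-empty $D$-open sets are abundant Baire sets, hence (via Corollary 2.6 and Theorem 2.4) essentially interchangeable with regions --- is missing, so the proposal as written does not constitute a proof.
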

 \begin{lemma}
 	Every non-empty $D$-open set is an abundant Baire set in $(X, \mathcal{C}$).
 	\begin{proof}
 		Let $S$ be a non-empty $D$-open set. If $S$ is a meager then by condition (i) of Definition 2.7, $D(S)$ = $\phi$.\\
 		Therefore,
 		\begin{equation*} \label{eq1}
 		\begin{split}
     X &= D(X)\\
 			& = D(X) - D(S)\\
 			& \subseteq D(X - S)\\
 			& \subseteq X - S \\
 		&	- ~a ~contradiction.
 		\end{split}
 		\end{equation*}
 		
 		Suppose $S$ is not Baire. Then there is a region $C$ in which both $S$ and $X$ $-$ $S$ are abundant everywhere.\\
 		Consequently, 
 		\begin{equation*} \label{eq1}
 			\begin{split}
 				S \cap C &\subseteq D(X - S)\\ 
 				&\subseteq X - S\\
 				& -~a~contradiction.
 			\end{split}
 		\end{equation*}
 		
 		Therefore, $S$ is an abundant Baire set.
 		\end{proof}
 \end{lemma}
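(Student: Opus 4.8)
The plan is to prove the two assertions---that a non-empty $D$-open set $S$ is abundant and that it is a Baire set---separately, in each case arguing by contradiction and exploiting the two structural facts already recorded for $D$: the algebraic inclusion $D(P) - D(Q) \subseteq D(P - Q)$, and the defining inclusion $D(X - S) \subseteq X - S$ that expresses the $D$-openness of $S$.

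For abundance, I would suppose to the contrary that the non-empty $D$-open set $S$ is meager and deduce $D(S) = \phi$. This is where condition (i) of Definition 2.7 is used: although (i) literally annihilates only singular sets, the relevant operator annihilates the whole meager ideal (for the cluster-point operator this is transparent, since a meager set is locally abundant at no point and hence has no cluster points). With $D(S) = \phi$ in hand, applying the subtraction inclusion with $P = X$ and $Q = S$ gives $X = D(X) = D(X) - D(S) \subseteq D(X - S)$, and composing this with $D$-openness $D(X - S) \subseteq X - S$ forces $X \subseteq X - S$, i.e. $S = \phi$, against the hypothesis. This half is essentially routine algebra once the annihilation of meager sets is granted.

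For the Baire property I would again argue by contradiction and begin by \emph{localizing} the failure. If $S$ is not Baire, then negating Definition 2.3 yields a region in which no subregion renders $S$ or $X - S$ meager; applying the Fundamental Theorem, and using the fact (Theorem 2.4) that the intersection of two regions either contains a region or is singular to keep the witnessing region inside the given one, I would produce a single region $C$ in every subregion of which both $S$ and $X - S$ are abundant---that is, $S$ and $X - S$ are abundant everywhere in $C$. The target is then the inclusion $S \cap C \subseteq D(X - S)$: granting it, $D$-openness gives $S \cap C \subseteq D(X - S) \subseteq X - S$, whence $S \cap C = \phi$, contradicting the everywhere-abundance (hence non-vacuity) of $S$ in $C$.

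The heart of the argument, and the step I expect to be the main obstacle, is justifying $S \cap C \subseteq D(X - S)$ from the everywhere-abundance of $X - S$ in $C$. For the basic topology, where $D$ is the cluster-point operator, this is immediate: for $x \in C$ the region $C$ itself witnesses that every subregion of $C$ containing $x$ meets $X - S$ in an abundant set, so $x$ is a cluster point of $X - S$ and lies in $D(X - S)$. For a general operator satisfying only Definition 2.7 the inclusion is more delicate, since additivity together with $D(X)=X$ only yields the weaker decomposition $C \subseteq X = D(S) \cup D(X - S)$, and one must upgrade this to show that local abundance of $X - S$ throughout $C$ places the points of $S \cap C$ in $D(X - S)$ rather than merely in $D(S)$. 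This conversion of everywhere-abundance into membership in the operator image is the crux on which the whole equivalence asserted in Theorem \ref{thrm:neat} ultimately rests, and it is there that I would concentrate the verification.
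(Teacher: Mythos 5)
Your proposal reproduces the paper's proof step for step: the abundance half is the same chain $X = D(X) = D(X) - D(S) \subseteq D(X-S) \subseteq X - S$, and the Baire half is the same inclusion $S \cap C \subseteq D(X-S) \subseteq X - S$ played against a region $C$ in which both $S$ and $X - S$ are abundant everywhere. The two points you flag as needing further justification are precisely the points the paper passes over in silence: it cites condition (i) of Definition 2.7 to get $D(S) = \phi$ for a \emph{meager} $S$, although (i) covers only singular sets and (ii) gives only finite additivity; and it asserts $S \cap C \subseteq D(X-S)$ with no argument whatever. So the paper contains nothing beyond what you already wrote; you have not missed a hidden lemma.

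Your suspicion about the step $S \cap C \subseteq D(X-S)$ is well founded, and you should not expect to close it from Definition 2.7 alone, because for a general operator $D$ the inclusion --- and indeed the lemma itself --- can fail. Take $X = \{1,2,3\}$ with the single region $X$: this is a category base (axiom (2) is vacuous), only $\phi$ is singular, hence only $\phi$ is meager, and the only Baire sets are $\phi$ and $X$. Let $D$ be the identity operator, which satisfies both conditions of Definition 2.7 and makes every subset $D$-open. Then $S = \{1\}$ is a non-empty $D$-open set that is not Baire, and with $C = X$ one has $S \cap C = \{1\} \not\subseteq D(X-S) = \{2,3\}$. (Note that this example even satisfies the hypothesis of Theorem \ref{thrm:neat}, so the difficulty propagates to the main theorem.) The argument genuinely requires $D$ to have the localizing property of the cluster-point operator --- namely that if a set is abundant everywhere in a region $C$ then every point of $C$ lies in $D$ of that set --- and it is exactly this property, not the axioms of Definition 2.7, that delivers $S \cap C \subseteq D(X-S)$ and also $D(S)=\phi$ for meager $S$. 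Your instinct to concentrate the verification there is correct; the honest conclusion is that the lemma should either be stated for the cluster-point operator (the basic topology) or Definition 2.7 should be augmented with that localization axiom.
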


\begin{proof} [Proof of  theorem \ref{thrm:neat}]
We first show that $\mathcal{M}$($\mathcal{C}$) = $\mathcal{M}$($\tau(D)$).\\
Let $M$ $\in$ $\mathcal{M}$($\mathcal{C}$) and $U$ be any non-empty $D$-open set. Then $U$ $-$ $M$ is also a non-empty $D$-open set because
\begin{equation*} \label{eq1}
\begin{split}
	 D(X-(U - M)) &= D(X - U) \cup D(U \cap M) \\
	& = D(X - U) ~as ~D(U \cap M) \subseteq D(M) = \phi\\
	& \subseteq X - U\\
	& \subseteq X - (U - M)
\end{split}
\end{equation*}
But $M$ $\cap$ ($U$ $-$ $M$) = $\phi$. So the $D$-open set $U$ has a non-empty $D$-open subset disjoint with $M$ proving that $M$ $\in$ $\mathcal{M}$($\tau(D)$). Hence $\mathcal{M}$($\mathcal{C}$) $\subseteq$ $\mathcal{M}$($\tau(D)$).\\

To prove the reverse inclusion, we need only to show that every $D$-nowhere dense set belongs to $\mathcal{M}$($\mathcal{C}$).\\

Let $A$ be a $D$-nowhere dense set but $A$ $\notin$ $\mathcal{M}$($\mathcal{C}$). By the fundamental theorem (Theorem 2.5), there exists a region $C$ such that $A$ is abundant everywhere in $C$.\\

By our assumption, $C$ contains a non-empty $D$-open set $U$ and this $D$-open set contains a non-empty $D$-open set $V$ such that $A$ $\cap$ $V$ = $\phi$. By Lemma 2.9, $V$ is an abundant Baire set in $(X, \mathcal{C}$), so by Corollary 2.6, there exists a region $C^\prime$ such that $C^\prime - V \in \mathcal{M}(\mathcal{C})$. Then the intersection $C$ $\cap$   $C^\prime$ being abundant, by Theorem 2.4, contains a region $E$ such that $E$ $\cap$ $A$ $\in$ $\mathcal{M}$($\mathcal{C}$) which is impossible.\\

Hence  $\mathcal{M}$($\tau(D)$) $\subseteq$  $\mathcal{M}$($\mathcal{C}$).\\

Next we show that $\mathcal{B}$($\mathcal{C}$) = $\mathcal{B}a$($\tau(D)$). Let $A$ $\in$ $\mathcal{B}$($\mathcal{C}$) and $T$ be any non-empty $D$-open set. By Lemma 2.9, $T$ is an abundant Baire set and so by Corollary 2.6, there exists a region $C$ such that $C$ $-$ $T$ is meager. Again, by Definition of Baire set in $(X, \mathcal{C}$), $C$ contains a region $C^\prime$ such that either $A$ $\cap$ $C^\prime \in \mathcal{M}(\mathcal{C})$ or $C^\prime - A \in \mathcal{M}(\mathcal{C})$. By our assumption, $C^\prime \supseteq$ $T^\prime \in \tau(D)$ ($T^\prime$ $\neq \phi$) where we may asssume that $T^\prime \subseteq$ $T$ simply by replacing $T$ $\cap$ $T^\prime$ by $T^\prime$. Since $\mathcal{M}$($\tau(D)$) =  $\mathcal{M}$($\mathcal{C}$), so either $A$ $\cap$ $T^\prime$ $\in$ $\mathcal{M}$($\tau(D)$) or $T^\prime$$-$ $A$ $\in$ $\mathcal{M}$($\tau(D)$). Let \{$T_\alpha$\}$_{\alpha < \Omega }$ be an well-ordering of all $D$-open sets in $X$. Thus every $D$-open set $T_\alpha$ contains a $D$-open set $T_{\alpha } ^\prime$ such that either $A$ $\cap$ $T_{\alpha } ^\prime$ or $T_{\alpha } ^\prime$ $-$ $A$ is of first category in the $D$-topology. Since $\bigcup\limits_{\alpha < \Omega} T_\alpha$ $-$ $\bigcup\limits_{\alpha < \Omega}T_{\alpha } ^\prime$ is $D$-nowhere dense, so from the Union Theorem (pg 82, [2]), it follows that $A$ $\in$ $\mathcal{B}a$($\tau(D)$). Hence 
$\mathcal{B}$($\mathcal{C}$) $\subseteq$ $\mathcal{B}a$($\tau(D)$).\\

Let $A$ $\in$ $\mathcal{B}a$($\tau(D)$) and $C$ be any region. If $A$ $\in$ $\mathcal{M}$($\tau(D)$), then $A$ $\cap$ $C$ $\in$ $\mathcal{M}$($\tau(D)$) = $\mathcal{M}$($\mathcal{C}$). If $A$ $\notin$ $\mathcal{M}$($\tau(D)$), then we may write $A$ = ($H$ $-$ $Q$) $\cup$ $R$ [10] where $H$($\neq$ $\phi$) $\in$ $\tau(D)$ and $Q$, $R$ $\in$ $\mathcal{M}$($\tau(D)$). By our assumption, $C$ $\supseteq$ $T$ $\in$ $\tau(D)$ ($T$ $\neq$ $\phi$) and by Lemma 2.9, $T$ is an abundant Baire set in $(X, \mathcal{C}$). Consequently, by Corollary 2.6, there exists a region $E$ such that $E$ $-$ $T$ $\in$ $\mathcal{M}$($\mathcal{C}$). Moreover, as the intersection $C$ $\cap$ $E$ is abundant, so by Theorem 2.4, there exists a region $F$ such that $F$ $\subseteq$ $C$ $\cap$ $E$. Two cases may occur. Either $H$ $\cap$ $T$ = $\phi$ or $H$ $\cap$ $T$ $\neq$ $\phi$. If $H$ $\cap$ $T$ = $\phi$, then $A$ $\cap$ $F$ $\in$ $\mathcal{M}$($\mathcal{C}$). If $H$ $\cap$ $T$ $\neq$ $\phi$, choose $\phi$ $\neq$ $O$ ( $\in$ $\tau(D)$) $\subseteq$ $H$ $\cap$ $T$, $E^\prime$ and $F^\prime$ as the regions such that $E^\prime$ $-$ $O$ $\in$ $\mathcal{M}$($\mathcal{C}$) and $F^\prime$ $\subseteq$ $C$ $\cap$ $E^\prime$. But then $F^\prime$ $-$ $A$ $\in$ $\mathcal{M}$($\mathcal{C}$). Thus every region has a subregion in which either $A$ or its complement is meager proving that $A$ $\in$ $\mathcal{B}$($\mathcal{C}$). Hence $\mathcal{B}a$($\tau(D)$) $\subseteq$ $\mathcal{B}$($\mathcal{C}$). This proves the theorem.
	\end{proof}	
The equivalence theorem of Morgan [3] states that if $(X, \mathcal{C}$) is a category base in which every region contains a minimal region, then $(X, \mathcal{C}$) is equivalent to the basic topology associated with $(X, \mathcal{C}$). Since any union of minimal regions is an open set in the basic topology [3], so Morgan's hypothesis stated in terms of minimal region gives rise to a situation in which our assumption is fulfilled.\\

Another example where the situation similarly leads to the fullfilment of our assumption may be illustrated by the topology introduced by Martin [9] in certain measure spaces. \\

Let $(X, \mathcal{L}$, $m$) be a totally finite, complete measure space in which $m(X)$ = 1 and there is a collection $\mathcal{K}$ of sequences \{$K_n$\} of sets from $\mathcal{L}$ satisfying the property that for each $x \in$ $X$ there exists at least one sequence \{$K_n$\} $\in$ $\mathcal{K}$ such that 
\begin{enumerate}
	\item $x \in$ $K_n$ for each $n$.
	\item $m$($K_n$) $\mapsto$ $0$ as $n$ $\mapsto$ $\infty$.
\end{enumerate}

In [9], the definition of upper (resp, lower) density denoted by $D^{-*}(E,x)$ (resp, ${D_{-}}^{*}(E,x)$) of any set $E$ at a point $x$ was given in terms of sequences from $\mathcal{K}$.\\

Martin showed that the family of subsets of $X$ defined by $\mathcal{U}$ = \{$U$ :  $D^{-*}$($X$ $-$ $U$, $x$) = 0 for all $x$ $\in$ $U$\} forms a topology on $X$ which is equivalent to the topology induced by the operator $D$ where $D(E)$ = \{$x$ :   $D^{-*}(E, x$)$ > 0$\}. The measure $m$ being a category measure for $(X, \mathcal{U}$) (Theorem 5.1, [9]), the category base is equivalent to the $D$-topology. Moreover, in this circumstances, Martin also proved (Lemma 4.8, [9]) that every $m$-measurable set of positive measure (which is a region in the category base $(X, \mathcal{L}$)) contains a $D$-open set.

\bibliographystyle{plain}

	\end{document}